\apptocmd{\sloppy}{\hbadness 10000\relax}{}{}
\newcommand{\comment}[1]{} 
\newcommand{\C}{\mathbb{C}}
\newcommand{\Z}{\mathbb{Z}}
\newcommand{\N}{\mathbb{N}}
\newcommand{\BF}{\mathfrak{BF}}
\newcommand{\cR}{\mathcal R}
\newcommand{\topdf}{\texorpdfstring}
\DeclareMathOperator{\reg}{reg}
\DeclareMathOperator{\coker}{coker}
\def\gr{\operatorname{gr}}
\definecolor{thmcol}{RGB}{145, 1, 1}
\definecolor{citecol}{RGB}{145, 1, 1}
\definecolor{linkcol}{RGB}{145, 1, 1}
\definecolor{urlcol}{RGB}{145, 1, 1}
\numberwithin{equation}{section}
\theoremstyle{plain}
\newtheorem{thm}[equation]{Theorem}
\newtheorem{lemma}[equation]{Lemma}
\newtheorem{lem}[equation]{Lemma}
\newtheorem{coro}[equation]{Corollary}
\newtheorem{prop}[equation]{Proposition}
\theoremstyle{definition}
\newtheorem{defn}[equation]{Definition}
\newtheorem{stan}[equation]{Standing assumption}
\theoremstyle{remark}
\newtheorem{rmk}[equation]{Remark}
\newtheorem{nota}[equation]{Notation}
\newtheorem*{ack}{Acknowledgements}
\title[Nonexistence of graded unital homomorphisms]{Nonexistence of graded unital homomorphisms between Leavitt algebras and their Cuntz splices}
\author{Guido Arnone and Guillermo Corti\~nas}
\address{Dep. Matem\'atica-IMAS\\ Facultad de Ciencias Exactas y Naturales\\
Universidad de Buenos Aires\\ Argentina}
\thanks{The second named author was supported by CONICET and partially supported by grants PICT 2017-1395 from Agencia Nacional de Promoci\'on Cient\'\i fica y T\'ecnica, UBACyT 0150BA from Universidad de Buenos Aires, and PGC2018-096446-B-C21 from the Spanish Ministerio de Ciencia e Innovaci\'on}
\date{}
\begin{document}

\begin{abstract}
Let $n\ge 2$, let $\cR_n$ be the graph consisting of one vertex and $n$ loops and let $\cR_{n^-}$ be its Cuntz splice. Let $L_n=L(\cR_n)$ and $L_{n^-}=L(\cR_{n^-})$ be the Leavitt path algebras over a unital ring $\ell$. Let $C_m$ be the cyclic group on $2\le m\le \infty$ elements. Equip $L_n$ and $L_{n^-}$ with their natural $C_m$-gradings.  We show that under mild conditions on $\ell$, which are satisfied for example when $\ell$ is a field or a PID, there are no unital $C_m$-graded ring homomorphisms $L_n\to L_{n^-}$ nor in the opposite direction. 
\end{abstract}
\maketitle

\section{Introduction}
Let $\ell$ be a unital ring and $n\ge 1$. The \emph{Leavitt algebra} over $\ell$ is the Leavitt path algebra \cite{lpabook}*{Definition 1.2.3} $L_n=\ell\otimes L_{\Z}(\cR_n)$ of the graph $\cR_n$ consisting of a single vertex and $n$ loops. We write $L_{n-}$ for the Leavitt path algebra over $\ell$ of the graph $\cR_{n^-}$ whose adjacency matrix is

\begin{equation}\label{intro:Rn-}
A_{\cR_{n^-}} = \begin{pmatrix}
n & 1 & 0\\
1 & 1 & 1\\
0 & 1 & 1    
\end{pmatrix}.  
\end{equation}
The graph $\cR_{n^-}$ is the Cuntz splice of $\cR_n$ \cite{alps}*{Defintion 2.11}. 
It is an open question \cite{alps}*{Hypothesis on page 24} whether the algebras $L_2$ and $L_{2^-}$ over a field $\ell$ are isomorphic or not. As with any Leavitt path algebras, $L_n$ and $L_{n^-}$ are graded over the infinite cyclic group $C_\infty$, and therefore also over the cyclic
group $C_m$ of $m$ elements for all $m\ge 2$, via the grading mod $m$. The main result of this paper is the following theorem, which puts together Theorems \ref{thm:main1} and \ref{thm:main2}.

\begin{thm}\label{intro:main}
Let $n\ge 2$ and $2\le m\le \infty$. Assume that $\ell$ is regular supercoherent and that the canonical map $\Z\to K_0(\ell)$ is an isomorphism. Then there are no unital $C_m$-graded ring homomorphisms $L_n\to L_{n^-}$ nor in the opposite direction. 
\end{thm}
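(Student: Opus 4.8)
The plan is to obstruct such homomorphisms by means of graded algebraic $K$-theory. A unital $C_m$-graded ring homomorphism $\phi\colon L_n\to L_{n^-}$ induces, via base change $-\otimes_{L_n}L_{n^-}$ on finitely generated graded projective modules (which commutes with degree shifts because $\phi$ is graded), a homomorphism $\phi_*\colon K_0^{\gr}(L_n)\to K_0^{\gr}(L_{n^-})$ of $\Z[C_m]$-modules carrying the class of $1$ to the class of $1$; the same applies to homomorphisms $L_{n^-}\to L_n$. So it suffices to rule out any pointed $\Z[C_m]$-module homomorphism in either direction. The first task is to compute the two graded $K_0$-groups, and this is where the hypotheses on $\ell$ enter: when $\ell$ is regular supercoherent and $\Z\to K_0(\ell)$ is an isomorphism, the graded $K$-theory of $L_n=\ell\otimes L_\Z(\cR_n)$ and of $L_{n^-}$ agrees with the one over $\Z$, given by the standard combinatorial description for Leavitt path algebras. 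Establishing this reduction—through a fundamental-theorem/homotopy-invariance argument for the $K$-theory of the relevant coherent rings—is the technical backbone of the proof.

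Writing $t$ for the canonical generator of $C_m$ acting by the degree shift, one has $K_0^{\gr}(L_n)\cong\Z[C_m]/(1-nt)$: a cyclic module generated by the class $[L_n]$ of $1$, subject to the single relation $[L_n]=nt\,[L_n]$. For the Cuntz splice, $K_0^{\gr}(L_{n^-})\cong\coker\!\big(I-tA\colon\Z[C_m]^3\to\Z[C_m]^3\big)$, where $A=A_{\cR_{n^-}}$, with generators $g_1,g_2,g_3$ (the classes of the three vertex idempotents), relations $g_i=t\sum_jA_{ij}g_j$, and $[L_{n^-}]=g_1+g_2+g_3=:u$. Two features of $A$ are used throughout: $A$ is symmetric with $\det A=-1$, hence invertible over $\Z$, so that $t$ acts invertibly on $K_0^{\gr}(L_{n^-})$ and, when $m=\infty$, the underlying abelian group is free on $g_1,g_2,g_3$; and, since the second row of $A$ is $(1,1,1)$, the relation indexed by the vertex $2$ reads $g_2=t\,u$.

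Consider first homomorphisms $L_{n^-}\to L_n$. Such a map would give a pointed $\Z[C_m]$-module homomorphism $g\colon K_0^{\gr}(L_{n^-})\to K_0^{\gr}(L_n)$ with $g(u)=[L_n]$; put $a_i=g(g_i)$. In $\Z[C_m]/(1-nt)$ the class of $n$ is a unit (as $\gcd(n,n^m-1)=1$) with inverse $t$, so the transported relations $na_i=\sum_jA_{ij}a_j$ give, reading the rows of $A$ in turn, first $a_2=0$ and then $a_1+a_3=0$; hence $g(u)=a_1+a_2+a_3=0$. But $[L_n]\ne0$ in $\Z[C_m]/(1-nt)$ (this ring is $\Z/(n^m-1)$ for finite $m$ and $\Z[1/n]$ for $m=\infty$, nonzero in either case), a contradiction. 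This direction is uniform in $m$.

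Now consider homomorphisms $L_n\to L_{n^-}$. Since $K_0^{\gr}(L_n)$ is cyclic on $[L_n]$ with annihilator the ideal $(1-nt)$, a pointed $\Z[C_m]$-module map $K_0^{\gr}(L_n)\to K_0^{\gr}(L_{n^-})$ exists if and only if $(1-nt)\,u=0$ in $K_0^{\gr}(L_{n^-})$; using $g_2=t\,u$ this says $u=n\,g_2$, i.e.\ $g_1+(1-n)g_2+g_3=0$. When $m=\infty$ this is impossible because the underlying group is free on the $g_i$. When $m<\infty$, invertibility of $A$ over $\Z$ identifies $K_0^{\gr}(L_{n^-})$ with $\coker(I-A^m\colon\Z^3\to\Z^3)$ ($t$ acting as $A^{-1}$), and the required nonvanishing becomes $(1,1-n,1)^{\mathsf t}\notin\operatorname{im}(I-A^m)$. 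Since $1$ is not an eigenvalue of $A$ (its characteristic polynomial takes the value $n-1\ne0$ at $1$), one has $(I-A)(0,-1,n-1)^{\mathsf t}=(1,1-n,1)^{\mathsf t}$ with $I-A$ injective, so this is equivalent to $(0,-1,n-1)^{\mathsf t}\notin\operatorname{im}(I+A+\dots+A^{m-1})$. I expect this purely arithmetic assertion—uniform in $m$ and $n$—to be the main obstacle. The approach I would take is reduction modulo a well-chosen prime $p$ dividing $\det(I+A+\dots+A^{m-1})$: the Perron eigenvector $w$ of $A$ satisfies $w_2=(\lambda-1)w_3$ for the Perron root $\lambda\in(n,n+2)$, so $(0,-1,n-1)\cdot w=(n-\lambda)\,w_3\ne0$, showing that the dominant eigendirection genuinely detects the class; making the choice of $p$ effective and $m$-independent is the delicate point. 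Putting the two directions together gives the theorem.
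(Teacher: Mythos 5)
Your overall strategy is the paper's: obstruct graded unital homomorphisms via the pointed $\Z[C_m]$-module $K_0^{C_m-\gr}\cong\BF_m$, compute $\BF_m(\cR_n)\cong\Z[C_m]/(1-n\tau)$ and present $\BF_m(\cR_{n^-})$ by $I-\tau A^t$. Your treatment of the direction $L_{n^-}\to L_n$ is complete and correct (row 1 of $A$ forces $a_2=0$, row 2 then forces $a_1+a_3=0$, so the unit maps to $0$); this is an equivalent, slightly more hands-on version of the paper's Theorem \ref{thm:main1}. Your $m=\infty$ case of the other direction is also fine.

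The genuine gap is the direction $L_n\to L_{n^-}$ for finite $m$, which is the actual content of the paper (Sections \ref{sec:bfrn} and \ref{sec:lntoln-}). You correctly reduce the problem to the arithmetic assertion $(0,-1,n-1)^{\mathsf t}\notin\operatorname{im}(I+A+\dots+A^{m-1})$ over $\Z$, but you do not prove it; you only propose reducing modulo a prime $p\mid\det(I+A+\dots+A^{m-1})$ and offer a Perron-eigenvector heuristic. That heuristic cannot close the gap as stated: the pairing of $(0,-1,n-1)$ with the Perron eigenvector is an archimedean (real) nonvanishing statement and yields no information about membership in the image modulo a prime; for that you would need to control $\ker(I+A+\dots+A^{m-1}\bmod p)$ uniformly in $m$ and $n$, which is precisely the ``delicate point'' you flag and leave open. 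The paper takes a different and complete route: working in $\Z[x]/I_{m,n}$ with $I_{m,n}=\langle x^m-1,\xi_n(x)\rangle$, it proves explicit B\'ezout-type identities (Proposition \ref{prop:genej}) showing that $(1-nx)^2\in I_{m,n}$ forces $I_{m,n}=\langle (x-1)^2,\,m(x-1),\,(3n-1)(x-1)+n-1\rangle$, hence forces $\BF_m(\cR_{n^-})$ to be a small quotient (of order at most $n-1$, $4a^2$ or $am^2$ in the respective cases after reducing to $m$ prime); this contradicts the lower bound $|\BF_m(\cR_{n^-})|\ge|\BF_2(\cR_{n^-})|=3n^2-2n-1$, which the paper obtains from the real-rootedness of $\chi_{\cR_{n^-}}$ (Lemma \ref{lem:nonzerobfg} and Proposition \ref{prop:bfg-sp-nontriv}). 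None of this size-bound machinery, which is what actually finishes the proof, appears in your proposal. (The identification $K_0^{C_m-\gr}\cong\BF_m$ under the stated hypotheses on $\ell$ is also only asserted, but that is a citable reduction; the missing arithmetic argument is the substantive defect.)
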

Theorem \ref{intro:main} generalizes a similar statement proved for $n=m=2$ in \cite{classinvo}*{Proposition 5.6}. It implies that $L_n$ and $L_{n^-}$ are not graded isomorphic over any of the cyclic groups; this was well-known for the infinite cyclic group (see e.g. \cite{towards}*{Example 4.2}, \cite{talent}*{end of Section 4.1}), and follows from \cite{classinvo}*{Proposition 5.6} in the case $n=m=2$. The hypothesis on $\ell$ in the theorem above are satisfied, for example, when $\ell$ is a field, or a PID, or a noetherian regular local ring. They guarantee that for any directed graph $E$ with finitely many vertices and edges and such that every vertex emits at least one edge, and any $2\le m\le\infty$, the Grothendieck group
$K_0^{C_m-\gr}(L(E))$ of $C_m$-graded, finitely generated projective modules, equipped with the shift action, is isomorphic to the Bowen-Franks $\Z[C_m]$-module 
\begin{equation}\label{intro:k0gr}
K_0^{C_m-\gr}(L(E))\cong\BF_m(E):=\coker(I-\tau_m A_E^t).
\end{equation}
Here $\tau_m$ is a generator of $C_m$ and $A_E$ is the adjacency matrix of $E$. Under the isomorphism above, the class of the free module of rank one with its standard grading is mapped to $[1]_E:=\sum_{v\in E^{0}}[v]\in\BF_m(E)$. Thus if $F$ is another such graph, the existence of a unital, $C_m$-graded ring homomorphism $L(E)\to L(F)$ implies the existence of a homomorphism of $\Z[C_m]$-modules $\BF_m(E)\to \BF_m(F)$ mapping $[1]_E\mapsto [1]_F$. Our proof of Theorem \ref{intro:main} consists in showing that, for $\tau=\tau_m$, there are no such maps between
\begin{equation*}\label{intro:bfrnrn-}
\BF_m(\cR_n)=\Z[C_m]/\langle 1-n\tau \rangle  \text{ and } \BF_m(\cR_{n^{-}})=\Z[C_m]/\langle \tau^3+(2n-1)\tau^2-(n+2)\tau+1\rangle.
\end{equation*}

The rest of this article is organized as follows. In Section \ref{sec:BF} the Bowen-Franks modules are introduced and some of their basic properties are proved, including a nontriviality criterion (Lemma \ref{lem:nonzerobfg}).  The latter is used in Section \ref{sec:bfrn} to establish a lower bound on the number of elements of $\BF_m(\cR_{n^-})$ in Proposition \ref{prop:bfg-sp-nontriv}. The modules $\BF_m(\cR_n)$ and $\BF_m(\cR_{n^-})$ are also computed in this section, in Lemmas  \ref{lem:bfrn} and \ref{lem:diag-splice}. The isomorphism \eqref{intro:k0gr} is proved in Section \ref{sec:k0gr} as Lemma \ref{lem:k0}. In Section 
\ref{sec:ln-toln} we prove Theorem \ref{thm:main1}, which says that if $n\ge 2$ and $2\le m\le\infty$ then there is no $C_m$-graded unital ring homomorphism $L_{n^-}\to L_n$. The nonexistence of $C_m$-graded unital homomorphisms in the opposite direction is established in Section \ref{sec:lntoln-} as Theorem \ref{thm:main2}.

\begin{ack} The computational results 
of Sections \ref{sec:ln-toln} and \ref{sec:lntoln-} were verified 
using SageMath \cite{sagemath}. This software
was also used to obtain Equation \eqref{eq:(n-1)^2} in Proposition \ref{prop:genej}. The first named author wishes to thank Lucas De Amorin
and Fernando Martin for useful discussions. The second named author wishes to thank Pere Ara, whose manifestation of interest in the particular case $n=m=2$ of our main theorem proved in \cite{classinvo} was a key encouragement to pursue the general case. 
\end{ack}

\begin{nota}\label{nota:natu} In this paper the natural numbers do not include $0$. We write $\N=\Z_{\ge 1}$ and $\N_0=\N\sqcup\{0\}$. For $m\in \N_{\ge 2}$, we write $C_m$ for the cyclic group of order 
$m$ and $C_\infty \simeq \Z$. Having fixed $m \in \N_{\geq 2} \cup \{\infty\}$, 
the symbol $\tau$ will refer to a generator of $C_m$, written multiplicatively.
\end{nota}

\section{The Bowen-Franks modules of a graph}\label{sec:BF}

A (directed) \emph{graph} $E$ consists of a set $E^0$ of vertices and a set $E^1$ of edges together with source and range functions $r,s:E^1\to E^0$. A vertex $v\in E^0$ is \emph{regular} if it emits a finite, positive number of edges; we write $\reg(E)\subset E^0$ for the set of regular vertices. The (reduced) \emph{adjacency matrix} of a graph $E$ is the matrix $A_E$ with nonnegative integer coefficients, indexed by $\reg(E)\times E^0$, whose $(v,w)$ entry is the number of edges with source $v$ and range $w$. The graph $E$ is \emph{regular} if $E^0=\reg(E)$ and \emph{finite} if both $E^0$ and $E^1$ are. If $E$ is both finite and regular, then $A_E$ is a finite square matrix with no zero rows. 

\begin{defn} Let $m\in \N_{\geq 2} \cup \{\infty\}$. The \emph{Bowen-Franks $\Z[C_m]$-module}
of a finite regular graph $E$ with adjacency matrix $A_E$ 
is $\BF_m(E) := \coker(I-\tau \cdot A_E^t)$. 
\end{defn}

The following lemma shall be useful in what follows.

\begin{lem}\label{lem:bfan}
Let  $2\le m<\infty$ and let $E$ be a finite regular graph. Equip $\coker(I-(A_E^t)^m)$ with the $C_m$-action $\tau \cdot [x] = [(A_E^t)^{m-1}x]$. Then there is a $\Z[C_m]$-module isomorphism
\[
\BF_m(E)\overset{\sim}{\longrightarrow}\coker(I-(A_E^t)^m),\,\,\, [v]\mapsto [v].
\]
\end{lem}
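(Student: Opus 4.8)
The plan is to write down the asserted isomorphism together with an explicit inverse. Set $B = A_E^{t}\in M_N(\Z)$ with $N = |E^{0}|$ and $R = \Z[C_m] = \Z[\tau]/(\tau^{m}-1)$, so that $\BF_m(E) = R^{N}/(I-\tau B)R^{N}$, and denote the target by $M = \Z^{N}/(I-B^{m})\Z^{N}$ with its $C_m$-action $\tau\cdot[x] = [B^{m-1}x]$. The underlying idea is that in both modules the element $\tau$ acts as a ``formal inverse'' of $B$ once the relation $B^{m}=I$ is imposed.

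First I would record the facts that make the relevant maps well defined: $B^{m-1}$ commutes with $I-B^{m}$, and $(B^{m-1})^{m} = (B^{m})^{m-1}\equiv I\pmod{I-B^{m}}$, so $\tau\cdot[x]=[B^{m-1}x]$ does define a $C_m$-action on $M$, and for $p\in R$ the operator $p(B^{m-1})$ acts on $M$. This lets me define an $R$-linear map $\phi\colon R^{N}\to M$ by $R$-linearly extending $e_i\mapsto[e_i]$, explicitly $\phi\big(\sum_i p_i(\tau)e_i\big) = \big[\sum_i p_i(B^{m-1})e_i\big]$. A one-line computation gives $\phi\big((I-\tau B)e_i\big) = [e_i - B^{m-1}Be_i] = [(I-B^{m})e_i] = 0$, so $\phi$ factors through a map $\bar\phi\colon\BF_m(E)\to M$ with $\bar\phi([v])=[v]$; it is surjective because the $[e_i]$ generate $M$ already over $\Z$.

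Next I would construct the inverse. Iterating the defining relation $[(\tau B)v] = [v]$ of $\BF_m(E)$ and using $\tau^{m}=1$ yields $[B^{m}v] = [(\tau B)^{m}v] = [v]$ for all $v\in R^{N}$; in particular $[(I-B^{m})x]=0$ for $x\in\Z^{N}$, so the $\Z$-linear map $\Z^{N}\to\BF_m(E)$, $e_i\mapsto[e_i]$, factors through $\psi\colon M\to\BF_m(E)$. To see $\psi$ is $R$-linear it is enough to check $\psi(\tau\cdot[x]) = \tau\,\psi([x])$ for $x\in\Z^{N}$, that is $[B^{m-1}x] = [\tau x]$ in $\BF_m(E)$; this follows from the relation $[(\tau B)v]=[v]$ applied with $v = B^{m-1}x$ together with $[B^{m}(\tau x)]=[\tau x]$. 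Finally $\bar\phi$ and $\psi$ are mutually inverse, since both composites are $R$-linear and fix each generator $[e_i]$, which also matches the normalization $[v]\mapsto[v]$ in the statement.

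All of these computations are routine presentation manipulations; the only point that needs a little care is the $C_m$-equivariance of $\psi$, where the substitution $\tau\rightsquigarrow B^{m-1}$ and the relation $B^{m}\equiv I$ genuinely interact. I do not expect any serious obstacle beyond that.
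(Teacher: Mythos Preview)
Your argument is correct: the maps $\bar\phi$ and $\psi$ are well defined, $R$-linear, and mutually inverse, and the one subtle point---the equivariance of $\psi$---is handled by combining $[\tau B\,B^{m-1}x]=[B^{m-1}x]$ with $[B^{m}(\tau x)]=[\tau x]$ exactly as you indicate.

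The paper proceeds differently. It multiplies $I-\tau A_E^{t}$ by the unit $\tau^{-1}$, writes the resulting endomorphism of $\Z[C_m]^{E^{0}}$ as an integer matrix with respect to the $\Z$-basis $\{v\tau^{i}:v\in E^{0},\,0\le i\le m-1\}$, and asserts the result follows from the shape of that block matrix (a block-cyclic matrix whose elimination produces $I-B^{m}$). Your approach bypasses this matrix computation entirely by writing down the isomorphism and its inverse explicitly and checking the relations by hand. The paper's version is shorter but leaves the block reduction to the reader; yours is longer but entirely self-contained and makes the $C_m$-equivariance transparent rather than implicit in the choice of basis.
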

\begin{proof}
Put $M=\tau^{-1}(I-\tau A_E^t)$; clearly $\BF_m(E)\cong \coker(M)$ as $\Z[C_m]$-modules. The lemma is immediate from the form of the matrix of multiplication by $M$ with respect to the $\Z$-linear basis $\{v\tau^i:v\in E^0, 0\le i\le m-1\}$ of  $\Z[C_m]^{E^0}$. 
\end{proof}
\begin{coro}\label{coro:bfan}
The following are equivalent.
\item[i)] $\BF_m(E)$ is finite.
\item[ii)] $\chi_{A_E^m}(1)\ne 0$.
\goodbreak
\noindent If these equivalent conditions hold, then $|\BF_m(E)|=|\chi_{A_E^m}(1)|$.
\end{coro}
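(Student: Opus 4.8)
The plan is to reduce everything to the statement of Lemma~\ref{lem:bfan} together with the standard structure theory for finitely generated abelian groups. Write $B=(A_E^t)^m$, a square integer matrix indexed by $E^0$. By Lemma~\ref{lem:bfan} there is a $\Z[C_m]$-module isomorphism $\BF_m(E)\cong\coker(I-B)$; in particular, forgetting the $C_m$-action, $\BF_m(E)$ and $\coker(I-B)=\Z^{E^0}/(I-B)\Z^{E^0}$ are isomorphic as abelian groups, so they are simultaneously finite or infinite and, when finite, have the same cardinality.

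Next I would invoke the Smith normal form: writing $I-B=UDV$ with $U,V\in\mathrm{GL}_{E^0}(\Z)$ and $D=\mathrm{diag}(d_1,\dots,d_{|E^0|})$, one gets $\coker(I-B)\cong\bigoplus_i \Z/d_i\Z$. Hence $\coker(I-B)$ is finite if and only if every $d_i$ is nonzero, i.e.\ if and only if $\det(I-B)=\pm\prod_i d_i\ne 0$, and in that case $|\coker(I-B)|=\prod_i|d_i|=|\det(I-B)|$.

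It then remains to identify the determinant. Since transposition preserves determinants and $(A_E^t)^m=(A_E^m)^t$, we have
\[
\det(I-B)=\det\bigl(I-(A_E^m)^t\bigr)=\det(I-A_E^m)=\chi_{A_E^m}(1),
\]
where $\chi_{A_E^m}(t)=\det(tI-A_E^m)$ is the characteristic polynomial. Combining the three observations gives that $\BF_m(E)$ is finite if and only if $\chi_{A_E^m}(1)\ne 0$, and that $|\BF_m(E)|=|\chi_{A_E^m}(1)|$ in that case. There is no real obstacle here: the corollary is essentially bookkeeping once Lemma~\ref{lem:bfan} is in hand, the only points requiring a word of justification being the Smith normal form computation of the order of a cokernel and the elementary identity $(A_E^t)^m=(A_E^m)^t$.
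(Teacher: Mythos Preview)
Your proof is correct and follows exactly the approach the paper indicates: the paper's proof is simply the one-line remark ``Straightforward from Lemma~\ref{lem:bfan} using the Smith normal form of the matrix $I-(A^t_E)^m$,'' and you have accurately supplied the details of that computation.
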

\begin{proof} Straightforward from Lemma \ref{lem:bfan} using the Smith normal form of the matrix $I-(A^t_E)^m$.
\end{proof}

Let $E$ be a finite regular graph. In the following lemma and elsewhere, we write
\begin{equation}\label{eq:chie}
\chi_E(x)=\det(xI-A_E)\in\Z[x]
\end{equation}
for the characteristic polynomial associated to the adjacency matrix of $E$.
 \begin{lemma}\label{lem:nonzerobfg} Let $E$ be a finite regular graph. Assume that all complex roots of $\chi_E$ are real. If $\BF_2(E)$ is finite and nontrivial, then
$\infty>|\BF_m(E)|> |\BF_2(E)|> 1$ for all $m>2$.
\end{lemma}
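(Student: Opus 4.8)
The plan is to reduce everything to the polynomial $\chi_E$ and its real roots. By Corollary \ref{coro:bfan}, for finite $m$ the module $\BF_m(E)$ is finite iff $\chi_{A_E^m}(1)\ne 0$, and in that case $|\BF_m(E)|=|\chi_{A_E^m}(1)|$. If $\lambda_1,\dots,\lambda_d$ are the complex eigenvalues of $A_E$ (with multiplicity), then the eigenvalues of $A_E^m$ are $\lambda_1^m,\dots,\lambda_d^m$, so $\chi_{A_E^m}(1)=\prod_{i=1}^d(1-\lambda_i^m)$. Under our hypothesis every $\lambda_i$ is real. So I would first record the clean formula $|\BF_m(E)|=\prod_{i=1}^d|1-\lambda_i^m|$ whenever the right-hand side is nonzero, valid for all finite $m\ge 2$; the case $m=\infty$ is not needed here (the statement concerns $m>2$ finite, reading $\infty>|\BF_m(E)|$ as "finite").

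Next I would analyze the one-variable function $m\mapsto |1-\lambda^m|$ for a fixed real $\lambda$, comparing $m=2$ with general $m>2$. The key elementary fact is: for every real $\lambda$ and every integer $m>2$,
\begin{equation*}
|1-\lambda^m|\ \ge\ |1-\lambda^2|,
\end{equation*}
with equality only when $\lambda\in\{-1,0,1\}$, i.e. only when $1-\lambda^2=0$. Indeed, if $|\lambda|\le 1$ then $|\lambda|^m\le|\lambda|^2$, and one checks $|1-\lambda^m|\ge 1-|\lambda|^m\ge\dots$ needs a little care for negative $\lambda$; cleanest is to split into $|\lambda|\le 1$ and $|\lambda|>1$ and, within each, the sign of $\lambda$, using that $\lambda^2>0$. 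For $|\lambda|>1$ one has $|\lambda|^m>|\lambda|^2>1$ so $|1-\lambda^m|=|\lambda|^m-1>|\lambda|^2-1=|1-\lambda^2|$. For $|\lambda|<1$ one has $|1-\lambda^m|\ge 1-|\lambda|^m>1-|\lambda|^2\ge|1-\lambda^2|$ when $\lambda\ge 0$; and when $-1<\lambda<0$, $\lambda^2\in(0,1)$ forces $1-\lambda^2\in(0,1)$ while $|1-\lambda^m|\ge 1-|\lambda|^m>1-|\lambda|^2=1-\lambda^2$. So strict inequality holds unless $\lambda^2=1$ or $\lambda=0$. Taking the product over $i$: since $\BF_2(E)$ is finite, no $\lambda_i$ equals $\pm 1$, hence no $\lambda_i\in\{-1,0,1\}$ with $1-\lambda_i^2=0$; wait — $\lambda_i=0$ gives $1-\lambda_i^2=1\ne 0$, so finiteness of $\BF_2(E)$ only rules out $\lambda_i=\pm1$. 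That is already enough: for each $i$ with $\lambda_i\ne 0$ we get strict inequality $|1-\lambda_i^m|>|1-\lambda_i^2|$, and for $\lambda_i=0$ we get equality $1=1$; and at least one $\lambda_i$ is nonzero (otherwise $A_E$ is nilpotent and $|\BF_2(E)|=1$, contradicting nontriviality). Multiplying, $|\BF_m(E)|=\prod_i|1-\lambda_i^m|>\prod_i|1-\lambda_i^2|=|\BF_2(E)|$, and in particular the product is nonzero so $\BF_m(E)$ is finite; finally $|\BF_2(E)|>1$ is exactly the nontriviality hypothesis.

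The main obstacle is the sign bookkeeping in the elementary inequality $|1-\lambda^m|\ge|1-\lambda^2|$ for negative $\lambda$ and the boundary behaviour near $|\lambda|=1$: one must be careful that the inequality can degenerate to equality, and pin down that this happens (for the product) only in the excluded cases. Once the per-eigenvalue inequality is nailed down with its equality analysis, the passage to $\BF_m$ via Corollary \ref{coro:bfan} and the product formula is routine. I would also remark that the reality hypothesis on the roots is what makes the crude bounds $|1-\lambda^m|\ge\big||\lambda|^m-1\big|$ tight enough; for complex roots the quantity $|1-\lambda^m|$ can oscillate and the monotonicity fails.
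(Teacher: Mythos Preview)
Your plan is essentially the paper's: use Corollary~\ref{coro:bfan} to write $|\BF_m(E)|=\prod_i|1-\lambda_i^m|$ and compare factor by factor with $m=2$, using reality of the $\lambda_i$, finiteness to exclude $\lambda_i=\pm1$, and nontriviality to ensure some $\lambda_i\ne 0$. The paper avoids your sign case-split by first bounding $|1-\lambda_i^m|\ge\big|1-|\lambda_i|^m\big|$ (an equality at $m=2$ since $\lambda_i^2=|\lambda_i|^2$) and then noting that $m\mapsto\big|1-|\lambda_i|^m\big|$ is strictly increasing once $|\lambda_i|\notin\{0,1\}$.

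One genuine slip to fix: in your $|\lambda|>1$ case you assert $|1-\lambda^m|=|\lambda|^m-1$, but this is false for $\lambda<-1$ and $m$ odd (e.g.\ $\lambda=-2$, $m=3$ gives $|1-(-8)|=9\ne 7$). The reverse triangle inequality only gives $|1-\lambda^m|\ge\big||\lambda|^m-1\big|=|\lambda|^m-1$, which is still enough for the strict inequality you need; replacing the equals sign by $\ge$ repairs the argument.
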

\begin{proof} Let $\chi_E = (x - \alpha_1) \cdots (x-\alpha_k)$ be the
irreducible factorization of $\chi_E$ in $\C[x]$ (repetitions are allowed). For each $m \geq 2$ we have
\begin{align*}
    |\chi_{A_E^m}(1)| = \prod_{i=1}^k |1-\alpha_i^m| &\geq 
    \prod_{i=1}^k |1-|\alpha_i|^m|. 
\end{align*} 
When $m = 2$ we have $|\alpha_i|^2 = \alpha_i^2$, since all roots are real, and thus 
the inequality above is in fact an equality. In particular, by Corollary \ref{coro:bfan}, the hypothesis that 
$\BF_2(E)$ is finite and nontrivial implies that $|\alpha_i|^m\ne 1$ for all $m\ge 1$. Hence the right hand side of the inequality is strictly increasing,
which shows that 
$|\BF_m(E)| = |\chi_{A_E^m}(1)| > |\BF_2(E)| > 1$ for $m > 2$ as desired.
\end{proof}

\section{The Bowen-Franks groups of the rose and of its Cuntz splice}\label{sec:bfrn}

Let $n \geq 1$; the \emph{rose of $n$ petals}  is the graph $\cR_n$ which consists of one
vertex and $n$ loops. The \emph{Cuntz splice} of $\cR_n$ (\cite{alps}*{Definition 2.11}) is the graph $\cR_{n^-}$ with adjacency matrix \eqref{intro:Rn-}.

\begin{lem}\label{lem:bfrn} Let $n,m\ge 1$. Then there is an isomorphism of $\Z[C_m]$-modules
\begin{equation}\label{eq:bfLn}
    \BF_m(\cR_n) \simeq \Z/(n^m-1)\Z.
\end{equation}
Here, the generator $\tau$ of $C_m$ acts on the right hand side of \eqref{eq:bfLn} as multiplication by $n^{m-1}$. 
\end{lem}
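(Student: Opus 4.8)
Here $n$ and $m$ are positive integers, so $\Z[C_m]=\Z[\tau]/(\tau^{m}-1)$ and, applying the definition of $\BF_m$ to the $1\times 1$ adjacency matrix $(n)$ of $\cR_n$, we have $\BF_m(\cR_n)=\coker(1-n\tau)=\Z[C_m]/\langle 1-n\tau\rangle$; the degenerate value $n=1$ is allowed. The plan is to identify the right hand side of \eqref{eq:bfLn} in two steps: first reduce the coefficient ring modulo $n^{m}-1$, and then eliminate $\tau$.

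The computation that drives everything is the telescoping identity in $\Z[C_m]$
\[
(1-n\tau)\bigl(1+n\tau+\dots+n^{m-1}\tau^{m-1}\bigr)=1-n^{m}\tau^{m}=1-n^{m},
\]
which shows that the integer $n^{m}-1$ lies in the ideal $\langle 1-n\tau\rangle$. Hence $(n^{m}-1)\Z[C_m]\subseteq\langle 1-n\tau\rangle$, and I would conclude that reduction of coefficients induces a ring isomorphism
\[
\BF_m(\cR_n)=\Z[C_m]/\langle 1-n\tau\rangle\ \xrightarrow{\ \sim\ }\ R[C_m]/\langle 1-n\tau\rangle,\qquad R:=\Z/(n^{m}-1)\Z .
\]

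Over $R$ the class of $n$ is a unit, with inverse $n^{m-1}$, since $n\cdot n^{m-1}=n^{m}\equiv 1\pmod{n^{m}-1}$; consequently $\tau-n^{m-1}=-n^{m-1}(1-n\tau)$ in $R[C_m]$, so $\langle 1-n\tau\rangle=\langle \tau-n^{m-1}\rangle$ there. Therefore
\[
R[C_m]/\langle 1-n\tau\rangle=R[\tau]/(\tau^{m}-1,\ \tau-n^{m-1})\cong R\big/\bigl((n^{m-1})^{m}-1\bigr)=R,
\]
the last step because $(n^{m-1})^{m}=(n^{m})^{m-1}\equiv 1\pmod{n^{m}-1}$. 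Tracing $1$ and $\tau$ through this chain of isomorphisms shows that the class of $1$ goes to $1$ and the class of $\tau$ goes to $n^{m-1}$; since the $C_m$-action on $\BF_m(\cR_n)$ is multiplication by $\tau$ in the quotient ring, this yields exactly the asserted $\Z[C_m]$-module isomorphism $\BF_m(\cR_n)\simeq\Z/(n^{m}-1)\Z$ with $\tau$ acting as multiplication by $n^{m-1}$. As a consistency check one could instead obtain the cardinality from the Smith normal form of $1-nP$, where $P$ is the cyclic permutation matrix of order $m$: its determinant is $\prod_{\zeta^{m}=1}(1-n\zeta)=1-n^{m}$.

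I do not anticipate a genuine obstacle here; the only points needing attention are verifying that each identification is $\Z[C_m]$-linear and not merely a ring map, and the case $n=1$, where $R=\Z$, the relation becomes $\tau=1$, and $\BF_m(\cR_1)\cong\Z$ with trivial $\tau$-action — all of which the argument above handles uniformly.
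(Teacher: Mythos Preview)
Your argument is correct. The paper's proof is the single phrase ``Straightforward calculation,'' which in context almost certainly means: apply Lemma~\ref{lem:bfan} to the $1\times 1$ matrix $A_{\cR_n}=(n)$, obtaining $\BF_m(\cR_n)\cong\coker(1-n^m)=\Z/(n^m-1)\Z$ with $\tau$ acting as $n^{m-1}$. Your telescoping identity $(1-n\tau)(1+n\tau+\dots+n^{m-1}\tau^{m-1})=1-n^m$ is exactly the content of that lemma in the one-vertex case (it is what the matrix of $\tau^{-1}(1-n\tau)$ in the basis $\{1,\tau,\dots,\tau^{m-1}\}$ encodes), so the two routes are essentially the same computation viewed from slightly different angles: you stay inside the quotient ring and kill $n^m-1$ first, while Lemma~\ref{lem:bfan} passes to a $\Z$-basis and row-reduces. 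Your version has the minor advantage of handling the degenerate case $n=1$ transparently without appeal to a general lemma.
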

\begin{proof} Straightforward calculation. 
\end{proof}

Next we turn our attention to $\BF_m(\cR_{n^-})$. Before computing it, we may already use Lemma \ref{lem:nonzerobfg} to establish the following nontriviality result.

\begin{prop} \label{prop:bfg-sp-nontriv} Let $n,m \geq 2$. Then 
$\BF_m(\cR_{n^{-}})$ is finite and of order at least $3n^2-2n-1$.
\end{prop}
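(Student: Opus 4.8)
The plan is to apply Lemma \ref{lem:nonzerobfg} to the graph $E = \cR_{n^-}$, so the work splits into two parts: verifying the hypotheses (that all complex roots of $\chi_{\cR_{n^-}}$ are real and that $\BF_2(\cR_{n^-})$ is finite and nontrivial) and then producing the explicit lower bound $3n^2-2n-1$. First I would compute the characteristic polynomial of the $3\times 3$ matrix in \eqref{intro:Rn-}; a direct expansion gives $\chi_{\cR_{n^-}}(x) = x^3 - (n+2)x^2 + (2n-1)x - 1$ (this matches, up to the usual reciprocal-type relation, the polynomial $\tau^3 + (2n-1)\tau^2 - (n+2)\tau + 1$ appearing in the introduction for the presentation of $\BF_m(\cR_{n^-})$).

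The key step is to show all three roots of $\chi_{\cR_{n^-}}$ are real. Since it is a real cubic, it has at least one real root; one way to force all three to be real is to exhibit sign changes of $\chi_{\cR_{n^-}}$ at suitable points, e.g. evaluate at $0$, at some small positive value, at $1$, and at a large value, and use the intermediate value theorem to trap three distinct real roots (for $n\ge 2$ one expects two roots near the interval $(0,1)$ coming from the ``Cuntz splice'' block and one large root near $n$). For instance $\chi_{\cR_{n^-}}(0) = -1 < 0$, $\chi_{\cR_{n^-}}(1) = 1 - (n+2) + (2n-1) - 1 = n - 3$, which is negative for $n=2$ but that case can be checked by hand or via the discriminant; and $\chi_{\cR_{n^-}}(x) \to +\infty$. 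I would pick the test points carefully (including something like $x=\tfrac12$ or evaluating $\chi_{\cR_{n^-}}$ at $2$, $n$, $n+1$) so that three sign changes appear for all $n\ge 2$; alternatively, one computes the discriminant of the cubic and shows it is positive for all $n\ge 2$, which is a finite polynomial inequality in $n$. This is the part I expect to require the most care, since the location of the small roots is delicate for small $n$.

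For finiteness and the lower bound, by Corollary \ref{coro:bfan} it suffices to compute $\chi_{A_{\cR_{n^-}}^2}(1)$. Using $\chi_{\cR_{n^-}}(x) = (x-\alpha_1)(x-\alpha_2)(x-\alpha_3)$, we have $\chi_{A_{\cR_{n^-}}^2}(1) = \prod_i (1-\alpha_i^2) = \prod_i (1-\alpha_i)(1+\alpha_i) = \chi_{\cR_{n^-}}(1)\cdot\bigl(-\chi_{\cR_{n^-}}(-1)\bigr)$. Now $\chi_{\cR_{n^-}}(1) = n-3$ and $\chi_{\cR_{n^-}}(-1) = -1 - (n+2) - (2n-1) - 1 = -(3n+3)$, so $\chi_{A_{\cR_{n^-}}^2}(1) = (n-3)\cdot(3n+3) = 3(n-3)(n+1) = 3n^2 - 6n - 9$. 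Hmm — this is nonzero for $n\ne 3$ but vanishes at $n=3$ and is negative for small $n$; in particular this does not match $3n^2-2n-1$, so I must recheck the sign conventions relating $A_E$ and $A_E^t$ and whether the relevant polynomial is $\chi_{\cR_{n^-}}$ or its reciprocal $x^3\chi_{\cR_{n^-}}(1/x) = -x^3 + (2n-1)x^2 - (n+2)x + 1$, whose value at $\pm 1$ gives the correct $|\BF_2|$. Evaluating the reciprocal polynomial $p(x) = x^3 + (2n-1)x^2 - (n+2)x + 1$ (matching the introduction) at $x=1$ gives $1 + (2n-1) - (n+2) + 1 = n-1$ and at $x=-1$ gives $-1 + (2n-1) + (n+2) + 1 = 3n+1$, so the product is $(n-1)(3n+1) = 3n^2 - 2n - 1$. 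Thus $\BF_2(\cR_{n^-})$ is finite of order exactly $3n^2-2n-1 \ge 7 > 1$ for $n\ge 2$, and Lemma \ref{lem:nonzerobfg} then yields $|\BF_m(\cR_{n^-})| > |\BF_2(\cR_{n^-})| = 3n^2-2n-1$ for $m > 2$, with all the modules finite; the case $m=2$ gives equality. So the remaining care is exactly in fixing the transpose/reciprocal bookkeeping consistently with the paper's conventions, and in the reality-of-roots argument.
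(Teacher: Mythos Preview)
Your approach is exactly the paper's: verify the hypotheses of Lemma~\ref{lem:nonzerobfg} for $E=\cR_{n^-}$ and then read off $|\BF_2(\cR_{n^-})|$ via Corollary~\ref{coro:bfan}. The confusion in the second half comes from a sign slip in the constant term of the characteristic polynomial. Expanding $\det(xI-A_{\cR_{n^-}})$ gives
\[
\chi_{\cR_{n^-}}(x)=x^3-(n+2)x^2+(2n-1)x+1,
\]
with constant term $+1$ (indeed $\det A_{\cR_{n^-}}=-1$, so $\chi_{\cR_{n^-}}(0)=(-1)^3\det A_{\cR_{n^-}}=1$). With this correction your factorization trick works immediately, without any transpose/reciprocal bookkeeping: $\chi_{\cR_{n^-}}(1)=n-1$ and $-\chi_{\cR_{n^-}}(-1)=3n+1$, so
\[
|\chi_{A_{\cR_{n^-}}^2}(1)|=\Bigl|\prod_i(1-\alpha_i^2)\Bigr|=\bigl|\chi_{\cR_{n^-}}(1)\cdot(-\chi_{\cR_{n^-}}(-1))\bigr|=(n-1)(3n+1)=3n^2-2n-1.
\]
Your detour through the reciprocal polynomial only happened to land on the right absolute value because $|\det A_{\cR_{n^-}}|=1$; it is not a valid argument in general and should be removed.

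For the reality of the roots, the paper just checks that the signs of $\chi_{\cR_{n^-}}$ at $-1,\,1,\,2,\,n+1$ alternate: one computes $\chi_{\cR_{n^-}}(-1)=-(3n+1)<0$, $\chi_{\cR_{n^-}}(1)=n-1>0$, $\chi_{\cR_{n^-}}(2)=-1<0$, and $\chi_{\cR_{n^-}}(n+1)=(n+1)(n-2)+1\ge 1>0$ for all $n\ge 2$. This is quicker and cleaner than a discriminant computation, and it avoids the special-casing at small $n$ that your tentative test points ran into.
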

\begin{proof} A direct computation shows that 
\[
    \chi_{R_{n^-}}(x) = x^3 - (n+2)x^2 + (2n-1)x + 1.
\]
One checks that the signs of the values of this polynomial at  $-1, 1, 2$
and $n+1$ alternate; hence all of its roots are real. Likewise, computing
\[
    \chi_{A_{R_{n^-}}^2}(x) = x^3 - (6+n^2)x^2 + (4n^2-2n+5)x - 1
\]
we see that $|\chi_{A_{R_{n^-}}^2}(1)| = |3n^2-2n-1| = 3n^2-2n-1 > 1$ if $n \geq 2$. The result now follows
from Lemma \ref{lem:nonzerobfg}. 
\end{proof}

\begin{lem}\label{lem:diag-splice} 
Let $n\ge 1$ and $2\le m\le \infty$; put
\begin{equation}\label{eq:elxi}
\xi_n(x) := x^3 + (2n-1)x^2 - (n+2)x+1 \in \Z[x].
\end{equation}
There is an isomorphism of $\Z[C_m]$-modules 
\begin{equation}
    \label{eq:bfspn}
    \BF_m(L_{n^-}) \simeq \frac{\Z[C_m]}{\langle \xi_n(\tau) \rangle}
\end{equation}
that sends $[(1,1,1)]$ to $[1-n\tau]$. 
\end{lem}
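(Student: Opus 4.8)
The plan is to compute $\BF_m(\cR_{n^-})=\coker(I-\tau A)$ directly from its presentation, where $A=A_{\cR_{n^-}}$ is the matrix \eqref{intro:Rn-}. Two observations make this clean: $A$ is symmetric, so $A^t=A$; and $\tau$ is a unit in $R:=\Z[C_m]$ for every $m$, including $m=\infty$, so no case distinction is needed. Write $[v_1],[v_2],[v_3]$ for the images in $\BF_m(\cR_{n^-})$ of the standard basis of the target $R^3$. Then $\BF_m(\cR_{n^-})$ is presented by these three generators subject to the relations read off from the columns of $I-\tau A$, namely $[v_1]=\tau(n[v_1]+[v_2])$, $[v_2]=\tau([v_1]+[v_2]+[v_3])$ and $[v_3]=\tau([v_2]+[v_3])$.

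Next I use invertibility of $\tau$ to eliminate generators. The first relation gives $[v_2]=(\tau^{-1}-n)[v_1]$ and eliminates $[v_2]$; substituting into the second relation and solving for $[v_3]$ (again using that $\tau$ is a unit) gives $[v_3]=(\tau^{-2}-(n+1)\tau^{-1}+(n-1))[v_1]$ and eliminates $[v_3]$; finally, substituting both expressions into the third relation and multiplying through by the unit $\tau^{2}$ leaves the single relation $\xi_n(\tau)[v_1]=0$. Since each elimination step is reversible (a generator occurring with unit coefficient in a relation may be removed), this exhibits an isomorphism $\BF_m(\cR_{n^-})\xrightarrow{\ \sim\ }R/\langle\xi_n(\tau)\rangle$ carrying $[v_1]$ to $1$; as a consistency check the same computation yields $\det(I-\tau A)=\xi_n(\tau)=\tau^{3}\chi_{\cR_{n^-}}(\tau^{-1})$.

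It then remains to renormalize so that $[(1,1,1)]=[v_1]+[v_2]+[v_3]$ maps to $[1-n\tau]$. Adding the three expressions above collapses (the constant and $\tau^2$ coefficients cancel) to $[(1,1,1)]=(\tau^{-2}-n\tau^{-1})[v_1]=\tau^{-2}(1-n\tau)[v_1]$. Post-composing the isomorphism with multiplication by the unit $\tau^{2}$ — an automorphism of $R/\langle\xi_n(\tau)\rangle$ as a $\Z[C_m]$-module — produces the isomorphism sending $[v_1]\mapsto\tau^{2}$, $[v_2]\mapsto\tau-n\tau^{2}$, $[v_3]\mapsto 1-(n+1)\tau+(n-1)\tau^{2}$, and hence $[(1,1,1)]\mapsto 1-n\tau$, as claimed. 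An equally short alternative is to write this map down as a candidate isomorphism $\BF_m(\cR_{n^-})\to R/\langle\xi_n(\tau)\rangle$ together with the candidate inverse $1\mapsto\tau^{-2}[v_1]$, and verify both composites are the identity; this amounts to the same verifications.

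There is no serious obstacle here: the only points needing care are (a) spelling out that the chain of generator-eliminations is genuinely reversible, so that one obtains an isomorphism and not merely a cyclic presentation, and (b) tracking the unit $\tau^{2}$ so that the image of $[(1,1,1)]$ comes out as $1-n\tau$ exactly, rather than only up to a unit of $R/\langle\xi_n(\tau)\rangle$.
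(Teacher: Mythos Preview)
Your argument is correct and is essentially the paper's proof in different clothing: the paper produces explicit invertible matrices $R,C\in M_3(\Z[C_m])$ with $R(I-\tau A)C=\operatorname{diag}(1,1,\xi_n(\tau))$ and takes $\pi_3\circ\overline{R}$ as the isomorphism, whereas your successive generator eliminations are exactly the row/column operations that build these matrices. Indeed, your final map $[v_1]\mapsto\tau^2$, $[v_2]\mapsto\tau-n\tau^2$, $[v_3]\mapsto 1-(n+1)\tau+(n-1)\tau^2$ is precisely the third row of the paper's matrix $R$, so the two isomorphisms coincide on the nose.
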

\begin{proof} Let $B = A_{R_{n^-}}^t$; one checks that the matrices
\[
    R =\begin{pmatrix}
        1 & -n & 0\\
        \tau & 1-n\tau & (n-1)\tau - (n+1)\\
        \tau^2 & \tau-n\tau^2 & (n-1)\tau^2 - (n+1)\tau + 1 
        \end{pmatrix}    
\]
and
\[
C  =\begin{pmatrix}
     1 & (1-n)\tau + n & (n-1)\tau^3 + (2n^2-4n+1)\tau^2 + (1-3n^2)\tau + n(n+1)\\
     0 & 1 & (n+1) + (1-2n)\tau - \tau^2\\
     0 & 0 & 1
     \end{pmatrix}  
\]
are invertible in $M_3(\Z[C_m])$ and satisfy 
\[
R(I-\tau B)C =  \begin{pmatrix}
    1 & 0 & 0\\
    0 & 1 & 0\\
    0 & 0 & \tau^3+(2n-1)\tau^2-(n+2)\tau+1   
    \end{pmatrix}.
\] 
Hence we have a commutative diagram as follows, 
where all vertical arrows are isomorphisms. 
\begin{center}
\begin{tikzcd}[column sep = large]
    \Z[C_m]^3 \arrow{d}{C^{-1}}\arrow{r}{I-\tau B} &
     \Z[C_m]^3 \arrow{d}{R} \arrow{r} & \BF_n(L_{2^-}) \arrow[dashed]{d}{\overline{R}} \\
    \Z[C_m]^3 \arrow{r}[below]{R(I-\tau B)C}
    & \Z[C_m]^3 \arrow{r} & 
    \frac{\Z[C_m]^3}{\langle e_1, e_2, (\tau^3+(2n-1)\tau^2-(n+2)\tau+1)e_3\rangle}
    \arrow{d}{\pi_3}\\
    & & \frac{\Z[C_m]}{\langle \tau^3+(2n-1)\tau^2-(n+2)\tau+1 \rangle}
\end{tikzcd}
\end{center}
The desired isomorphism is the composition $\pi_3 \overline{R}$. Indeed,
from the fact that the quotient map $\Z[C_m]^3 \to \BF_m(L_{n^-})$
maps $(1,1,1)$ to $[(1,1,1)]$ and the commutativity of the diagram above, we conclude that 
\[
\pi_3\overline{R}([(1,1,1)]) = \pi_3([R(1,1,1)]) = \pi_3([(1-n,-n,1-n\tau)]) = [1-n\tau].    
\]
\end{proof}

\section{Bowen-Franks modules, graded \topdf{$K_0$}{K0}, and Leavitt path algebras}\label{sec:k0gr}

A unital ring $R$ is \emph{coherent} if the category of its finitely presented modules is abelian; it is \emph{supercoherent} if the polynomial ring $R[t_1,\dots,t_n]$ is coherent for every $n\ge 0$. We say that $R$ is \emph{regular} if every (right) $R$-module has finite projective dimension, and \emph{regular supercoherent} if it is both regular and supercoherent. This implies that $R[t_1,\dots,t_n]$ is regular for all $n\ge 1$, by the argument of \cite{abc}*{beginning of Section 7}. 

\begin{stan}
Throughout the rest of this paper, $\ell$ will be a unital, regular supercoherent ring such that the canonical map
\[
\Z\to K_0(\ell)
\]
is an isomorphism.
\end{stan}

We write $L(E)$ for the \emph{Leavitt path algebra} of $E$ over $\ell$ (\cite{lpabook}), which equals the tensor product $\ell\otimes L_{\Z}(E)$ with the Leavitt path algebra over $\Z$. The algebra $L(E)$ carries a canonical $C_\infty$-grading $L(E)=\bigoplus_{k\in\Z} L(E)_k$ that makes it a $C_\infty$-graded algebra. Hence $L(E)$ is also $C_m$-graded for every $m\ge 2$, where $L(E)_{\bar{i}}=\bigoplus_{k\in\Z} L(E)_{mk+i}$ $(\bar{i}\in \Z/m\Z\cong C_m)$.

In general, if  $G$ is a group and $R=\bigoplus_{g\in G}R_g$ a $G$-graded unital ring, we write $K_0^{G-\gr}(R)$ for the group completion of the monoid of finitely generated projective $G$-graded $R$-modules. Grading shifts equip $K_0^{G-\gr}(R)$ with a $\Z[G]$-module structure.

In the case when $\ell$ is a field, the following lemma is immediate from a very particular case of \cite{gradstein}*{Proposition 5.7}. 
\begin{lem}\label{lem:k0}
Let $2\le m\le\infty$ and let $E$ be a finite regular graph. Then there is an isomorphism of $\Z[C_m]$-modules
\[
K_0^{C_m-\gr}(L(E))\cong \BF_m(E)
\]
which maps $[L(E)]\mapsto \sum_{v\in E^0}[v]$.
\end{lem}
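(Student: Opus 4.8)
The plan is to compute $K_0^{C_m\text{-}\gr}(L(E))$ by reducing to a situation where $L(E)$ carries a particularly simple graded structure, namely that of a graded corner of a graded matrix ring over a graded ultramatricial algebra. First I would recall that for a finite regular graph $E$, the Leavitt path algebra $L(E)$, with its canonical $C_\infty$-grading, is graded Morita equivalent to a ring built as a colimit along the maps $I - A_E^t$; this is the graded version of the realization of $L(E)$ as a Cuntz--Pimsner-type algebra, and on the level of graded $K_0$ it produces the exact sequence
\[
\Z[C_m]^{E^0} \xrightarrow{\ I - \tau A_E^t\ } \Z[C_m]^{E^0} \longrightarrow K_0^{C_m\text{-}\gr}(L(E)) \longrightarrow 0,
\]
the cokernel of which is by definition $\BF_m(E)$. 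The class $[L(E)]$ corresponds to the image of $\sum_{v} e_v$, which is $\sum_{v\in E^0}[v]$; this identifies the distinguished element correctly.

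The key technical point that must be addressed is that the cited result \cite{gradstein}*{Proposition 5.7} is stated over a field, whereas here $\ell$ is only assumed regular supercoherent with $\Z \xrightarrow{\sim} K_0(\ell)$. So the main work is to show that these hypotheses on $\ell$ suffice to run the same argument. The two places where the field hypothesis is typically used are: (i) to know that the relevant "level" algebras $L(E)_0$ (the degree-zero part) and its $C_m$-analogues are ultramatricial, hence have well-behaved $K_0$ computed by an explicit colimit; and (ii) to know that the graded $K_0$ is insensitive to base change issues, i.e. that $K_0$ of a matrix ring over $\ell$ is just $K_0(\ell) = \Z$. Point (ii) is exactly what the hypothesis $\Z \xrightarrow{\sim} K_0(\ell)$ buys us. For point (i), the degree-zero subalgebra $L(E)_0$ is, over any $\ell$, a colimit of rings of the form $\bigoplus \mathrm{M}_{k_i}(\ell)$ along non-unital corner inclusions (this is purely combinatorial, coming from the path structure of $E$), so $K_0(L(E)_0) = \colim \Z^{(\text{vertices at level } i)}$ with transition maps $A_E^t$, using only $K_0(\ell)=\Z$ and continuity of $K_0$ under filtered colimits.

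Concretely, the steps are: first, set up the $\Z$-grading and identify $L(E)_0$ as the ultramatricial colimit just described, computing $K_0(L(E)_0) \cong \colim(\Z^{E^0}, A_E^t)$ as a $\Z$-module; second, use regularity and supercoherence of $\ell$ — which by the remark preceding the Standing assumption pass to polynomial extensions — to invoke a graded fundamental theorem / Bass--Heller--Swan-type computation (as in \cite{abc}, \cite{gradstein}) identifying $K_0^{C_m\text{-}\gr}(L(E))$ with the cokernel of $1 - \tau A_E^t$ acting on $K_0(L(E)_0) \otimes \Z[C_m]$, which is $\BF_m(E)$; third, trace through the identification to check that $[L(E)]$, the rank-one free graded module, maps to $\sum_{v}[v]$, which follows since $L(E) = \bigoplus_v vL(E)$ and each $vL(E)_0$ is the projective corresponding to the basis vector $e_v$. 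The main obstacle I expect is purely bookkeeping: verifying that the regularity/supercoherence hypotheses are exactly the input needed to make the graded $K$-theory computation go through without the Noetherian or field assumptions used in the literature, and in particular that no higher $K$-groups $K_{-1}(\ell), K_1(\ell)$ obstruct the identification — this is precisely where "regular supercoherent" is used, since it forces the negative $K$-theory to vanish and the relevant Nil-terms to die, so the Bass--Heller--Swan sequence degenerates to the stated cokernel.
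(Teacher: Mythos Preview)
Your strategy differs from the paper's and has a genuine gap in Step~2. The paper's route is shorter: it passes to the $C_m$-covering graph $E_m$ of $E$ (as in \cite{gradstein}*{Section 5.2}), uses \cite{gradstein}*{Proposition 2.5 and Corollary 5.3} to obtain a $\Z[C_m]$-module isomorphism $K_0^{C_m\text{-}\gr}(L(E))\cong K_0(L(E_m))$ (this step needs no hypothesis on $\ell$), and then invokes \cite{abc}*{Theorem 7.6} --- which is precisely where regular supercoherence and $K_0(\ell)=\Z$ enter --- to compute the \emph{ungraded} group $K_0(L(E_m))=\coker(I-A_{E_m}^t)$; a direct combinatorial check then identifies this, with its $C_m$-action, with $\BF_m(E)$. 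So the hypotheses on $\ell$ are confined to a single citation, and no graded Bass--Heller--Swan machinery is invoked.

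Your plan instead tries to assemble the answer from $K_0(L(E)_0)$ via a BHS-type step, and the specific claim in Step~2 is wrong as written: you assert that $K_0^{C_m\text{-}\gr}(L(E))$ is the cokernel of $1-\tau A_E^t$ acting on $K_0(L(E)_0)\otimes\Z[C_m]$, but $K_0(L(E)_0)\cong\colim(\Z^{E^0},A_E^t)$ already has $A_E^t$ acting invertibly, so that further cokernel does not recover $\BF_m(E)=\coker\bigl(I-\tau A_E^t\colon\Z[C_m]^{E^0}\to\Z[C_m]^{E^0}\bigr)$. For $m=\infty$ the correct statement is Dade's theorem (using that $L(E)$ is strongly $\Z$-graded for regular $E$), which gives $K_0^{\Z\text{-}\gr}(L(E))\cong K_0(L(E)_0)$ \emph{directly}, and this colimit is then isomorphic to $\BF_\infty(E)$ as a $\Z[\tau^{\pm1}]$-module with no extra cokernel needed; for finite $m$ the analogous reduction forces you to analyse the $C_m$-degree-zero component $\bigoplus_k L(E)_{mk}$, which essentially leads back to the covering-graph description anyway. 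Your intuition that regularity and supercoherence kill Nil-terms and negative $K$-theory is correct, but that happens one layer down, inside the proof of the ungraded formula in \cite{abc}, not in a graded BHS sequence at the level you describe.
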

\begin{proof}
Let $E_m$ be the $C_m$-cover of $E$ as defined in \cite{gradstein}*{Section 5.2}. Observe that $C_m$ acts on $L(E_m)$ by algebra automorphisms, making
$K_0(L(E_m))$ into a $C_m$-module. By \cite{gradstein}*{Proposition 2.5 and Corollary 5.3}, there is a $\Z[C_m]$-module isomorphism 
$K_0^{C_m-\gr}(L(E))\cong K_0(L(E_m))$. By \cite{abc}*{Theorem 7.6}, $K_0(L(E_m))=\coker(I-A_{E_m}^t)$; one checks that this $\Z[C_m]$-module is precisely $\BF_m(E)$.
\end{proof}
\begin{coro}\label{coro:k0}
Let $E$ and $F$ be finite regular graphs and let $m\ge 2$. Then any unital, $C_m$-graded ring homomorphism $L(E)\to L(F)$ induces a homomorphism of $\Z[C_m]$-modules $\BF_m(E)\to \BF_m(F)$ that maps $\sum_{v\in E^0}[v]\mapsto \sum_{w\in F^0}[w]$.\qed
\end{coro}

In the rest of this article, we shall concentrate on the Leavitt path algebras $L_n=L(\cR_n)$ and $L_{n^-}=L(\cR_{n^-})$. 

\section{Nonexistence of graded homomorphisms \texorpdfstring{$L_{n^-}\to L_n$}{Ln- -> Ln}}\label{sec:ln-toln}

\begin{thm}\label{thm:main1}
Let $n\in\N_{\ge 2}$ and $m\in\N_{\ge 2}\cup\{\infty\}$. Then there is no unital $C_m$-graded
ring homomorphism $L_{n^{-}} \to L_n$. 
\end{thm}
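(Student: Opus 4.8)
The plan is to pass to Bowen--Franks modules through the functoriality statement of Corollary~\ref{coro:k0}, and then to play the explicit description of $\BF_m(\cR_n)$ from Lemma~\ref{lem:bfrn} against that of $\BF_m(\cR_{n^-})$ from Lemma~\ref{lem:diag-splice}. Concretely, suppose towards a contradiction that $\varphi\colon L_{n^-}\to L_n$ is a unital $C_m$-graded ring homomorphism. By Corollary~\ref{coro:k0}, $\varphi$ induces a homomorphism of $\Z[C_m]$-modules
\[
f\colon\BF_m(\cR_{n^-})\longrightarrow\BF_m(\cR_n)
\]
carrying the sum $[(1,1,1)]$ of the three vertex classes of $\cR_{n^-}$ to the class $u\in\BF_m(\cR_n)$ of the unique vertex of $\cR_n$.

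The crux is a divisibility obstruction. On the one hand, by definition $\BF_m(\cR_n)=\coker\bigl(1-n\tau\colon\Z[C_m]\to\Z[C_m]\bigr)=\Z[C_m]/\langle 1-n\tau\rangle$ as a $\Z[C_m]$-module, so multiplication by $1-n\tau$ is the zero endomorphism of $\BF_m(\cR_n)$; moreover $u\neq 0$, since $\BF_m(\cR_n)$ is nonzero — by Lemma~\ref{lem:bfrn} it is isomorphic to $\Z/(n^m-1)\Z$ with $n^m-1\ge 3$ when $m<\infty$, and to $\Z[1/n]$ when $m=\infty$. On the other hand, Lemma~\ref{lem:diag-splice} furnishes an isomorphism $\BF_m(\cR_{n^-})\cong\Z[C_m]/\langle\xi_n(\tau)\rangle$ under which $[(1,1,1)]$ corresponds to $[1-n\tau]=(1-n\tau)\cdot[1]$; hence $[(1,1,1)]=(1-n\tau)\cdot w$ for the element $w\in\BF_m(\cR_{n^-})$ corresponding to $[1]$. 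Since $f$ is $\Z[C_m]$-linear,
\[
u=f\bigl((1-n\tau)\cdot w\bigr)=(1-n\tau)\cdot f(w)=0,
\]
contradicting $u\neq 0$. This rules out $\varphi$ for every $n\ge 2$ and every $2\le m\le\infty$.

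Granting Corollary~\ref{coro:k0} and the two Bowen--Franks computations, the argument is essentially formal, and I do not expect a genuine obstacle in this direction. The only thing needing care is the bookkeeping of the distinguished elements: one must be sure that the image $u$ of $[(1,1,1)]$ is the nonzero generator of $\BF_m(\cR_n)$, and that under the isomorphism of Lemma~\ref{lem:diag-splice} the element $[(1,1,1)]$ is visibly divisible by $1-n\tau$ — both of which are immediate from the statements cited. The fact that a single relation simultaneously obstructs every candidate $\Z[C_m]$-linear map is what makes this direction short; this is in sharp contrast with the reverse direction (Theorem~\ref{thm:main2}), where no single divisibility kills all maps $\BF_m(\cR_n)\to\BF_m(\cR_{n^-})$ and a finer, partly computer-assisted analysis is required.
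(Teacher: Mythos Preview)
Your argument is correct and is essentially the paper's own proof: both exploit that under Lemma~\ref{lem:diag-splice} the class $[(1,1,1)]$ becomes $(1-n\tau)\cdot[1]$, while $(1-n\tau)$ annihilates $\BF_m(\cR_n)=\Z[C_m]/\langle 1-n\tau\rangle$, forcing the image of the distinguished element to vanish. The only cosmetic difference is the treatment of $m=\infty$: the paper first reduces to finite $m$ by observing that a $C_\infty$-graded map is in particular $C_2$-graded, whereas you handle $m=\infty$ directly by noting $\BF_\infty(\cR_n)\cong\Z[1/n]\neq 0$; either route is equally short.
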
 
\begin{proof} Because any $C_\infty$-graded homomorphism is $C_2$-graded, we may assume that $m<\infty$. Suppose that there exists a $C_m$-graded unital morphism as in the theorem. Then by Lemmas \ref{lem:bfrn} and \ref{lem:diag-splice} and Corollary \ref{coro:k0}, there must be a homomorphism of abelian groups 
$$
\phi:\Z[C_m]/\langle \xi_n(\tau)\rangle\to \Z/\langle n^m-1\rangle
$$
mapping $[1-n\cdot \tau]\mapsto [1]$ and sending multiplication by $\tau$ into multiplication by $n^{m-1}$. But 
then 
\[[1]=\phi([1-n\cdot \tau])=\phi(1)\cdot (1-n\tau)=\phi(1)(1-n^m)=0,\]
a contradiction. 
\end{proof}

\section{Nonexistence of graded homomorphisms \texorpdfstring{$L_n \to L_{n^-}$}{Ln->Ln-}}\label{sec:lntoln-}

Let $n,m\ge 2$. Consider the ideal
\begin{equation}\label{def:Inm}
I_{m,n} := \langle x^m-1, \xi_n(x) \rangle \triangleleft \Z[x].
\end{equation}

\begin{lem}\label{lem:critzx}
Let $n, m \geq 2$. The existence of  
a unital $C_m$-graded ring homorphism $L_{n} \to L_{n^-}$ would imply that $(1-nx)^2\in I_{m,n}$.
\end{lem}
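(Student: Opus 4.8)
The plan is to mimic the proof of Theorem \ref{thm:main1}, but now running Corollary \ref{coro:k0} in the opposite direction. A unital $C_m$-graded ring homomorphism $L_n\to L_{n^-}$ would, by Lemmas \ref{lem:bfrn} and \ref{lem:diag-splice} together with Corollary \ref{coro:k0}, give a $\Z[C_m]$-module homomorphism
\[
\phi\colon \Z/\langle n^m-1\rangle \to \Z[C_m]/\langle \xi_n(\tau)\rangle
\]
carrying the class $[1]$ (which is the image of $[\cR_n^0]=[v]$) to the class $[1-n\tau]$ (the image of $[(1,1,1)]$ under the isomorphism of Lemma \ref{lem:diag-splice}). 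Here the $\Z[C_m]$-module structure on the left is the one from Lemma \ref{lem:bfrn}, where $\tau$ acts as multiplication by $n^{m-1}$.

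First I would record what $\Z[C_m]$-linearity of $\phi$ says. On the source, $\tau\cdot[1]=[n^{m-1}]=n^{m-1}[1]$, so more generally $\tau^j\cdot[1]=n^{j(m-1)}[1]=n^{-j}[1]$ in $\Z/\langle n^m-1\rangle$ (using $n^m\equiv 1$). Applying $\phi$ and using that $\phi$ is $\Z[C_m]$-linear, for any $p(\tau)\in\Z[C_m]$ we get
\[
\phi\big(p(\tau)\cdot[1]\big)=p(\tau)\cdot\phi([1])=p(\tau)\cdot[1-n\tau]=[p(\tau)(1-n\tau)]\in \Z[C_m]/\langle\xi_n(\tau)\rangle .
\]
Second, I would exploit the single relation defining the source: $n^m\equiv 1$, equivalently $(1-n\tau^{-m})$, or more usefully, multiplying the source generator by a suitable element, the relation $n^m[1]=[1]$ must be respected, i.e. $\phi$ being well defined forces $(n^m-1)\cdot[1-n\tau]=0$ in the target, which is automatic and not yet the point. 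The real input is that $[1]$ is also hit by $\tau$-multiples: since $\tau\cdot[1]=n^{m-1}[1]$, we have $\phi(n^{m-1}[1])=\phi(\tau\cdot[1])=[\tau(1-n\tau)]=[\tau-n\tau^2]$, while also $\phi(n^{m-1}[1])=n^{m-1}\phi([1])=[n^{m-1}(1-n\tau)]=[n^{m-1}-n^m\tau]$. Equating gives $[\tau-n\tau^2]=[n^{m-1}-n^m\tau]$ in $\Z[C_m]/\langle\xi_n(\tau)\rangle$. Iterating, or more efficiently choosing $p(\tau)$ so that $p(\tau)\cdot[1]=[1]$ in the source while $p(\tau)\neq 1$ in $\Z[C_m]$, will produce the desired relation in the target. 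Concretely, because $\tau$ acts invertibly on the source with $\tau^{-1}$ acting as multiplication by $n$, the element $n\tau-1$ annihilates $[1]$ in the source (indeed $\tau^{-1}\cdot[1]=n[1]$ means $[1]=n\tau[1]$, i.e. $(n\tau-1)\cdot[1]=0$). Hence $0=\phi\big((n\tau-1)[1]\big)=(n\tau-1)\phi([1])=(n\tau-1)[1-n\tau]=-[(1-n\tau)^2]$, so $(1-n\tau)^2=0$ in $\Z[C_m]/\langle\xi_n(\tau)\rangle$, which is exactly the assertion that $(1-nx)^2\in\langle x^m-1,\xi_n(x)\rangle=I_{m,n}$.

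So the proof reduces to the clean observation that $(n\tau-1)$ kills $[1]$ in $\BF_m(\cR_n)$, after which $\Z[C_m]$-linearity does everything. The main thing to get right — and the only place any care is needed — is the bookkeeping of which module structure lives where: in $\BF_m(\cR_n)\cong\Z/(n^m-1)$ the generator $\tau$ acts by multiplication by $n^{m-1}$, so that $\tau^{-1}$ acts by $n$ and $(n\tau-1)[1]=0$; and under $\phi$ the generator $[1]$ must go to the class of $1-n\tau$, this being the image of $[(1,1,1)]$ identified in Lemma \ref{lem:diag-splice}. Once those identifications are in place the contradiction-free statement $(1-nx)^2\in I_{m,n}$ follows; note that, unlike in Theorem \ref{thm:main1}, this does not yet yield a contradiction — that nonmembership must be checked separately — which is presumably why the lemma is phrased as an implication rather than an outright nonexistence statement.
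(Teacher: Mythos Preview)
Your argument is correct and is exactly the one the paper has in mind: the paper's proof is a one-liner citing Lemmas \ref{lem:bfrn}, \ref{lem:diag-splice} and Corollary \ref{coro:k0}, and your write-up supplies the implicit computation, namely that $(1-n\tau)$ annihilates $[1]$ in $\BF_m(\cR_n)$ (since $\tau$ acts there as $n^{m-1}$ and $n\cdot n^{m-1}\equiv 1$), whence $\Z[C_m]$-linearity of the induced map forces $(1-n\tau)[1-n\tau]=0$ in $\Z[x]/I_{m,n}$. The exploratory middle paragraph (equating $[\tau-n\tau^2]$ with $[n^{m-1}-n^m\tau]$, etc.) is unnecessary and could be trimmed, but the final clean argument is the right one.
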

\begin{proof}
Lemma \ref{lem:diag-splice} says that $\BF_m(\cR_{n^-})=\Z[x]/I_{m,n}$. The present lemma follows from this using Lemma \ref{lem:bfrn} and Corollary \ref{coro:k0}.
\end{proof}

\begin{prop}\label{prop:genej}
Let $m,n\ge 2$.  Assume that $(1-nx)^2\in I_{m,n}$. Then
\begin{equation*}\label{eq:char-inm}
    I_{m,n} = \langle (x-1)^2, m(x-1), (3n-1)(x-1) + n-1\rangle.
\end{equation*}
\end{prop}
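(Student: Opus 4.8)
The plan is to work in the quotient ring $\Z[x]/\langle (x-1)^2\rangle$, where the class of $x$ is $1+\varepsilon$ with $\varepsilon^2=0$, so that every element is of the form $a+b\varepsilon$ with $a,b\in\Z$ and computations become affine-linear. Writing $J=\langle (x-1)^2, m(x-1), (3n-1)(x-1)+n-1\rangle$, I would first check the easy inclusion $J\subseteq I_{m,n}$: the generator $(x-1)^2$ is clearly in $I_{m,n}$ once we know $x^m-1\in I_{m,n}$ reduces appropriately, $m(x-1)$ comes from the identity $x^m-1=(x-1)(x^{m-1}+\dots+1)$ together with $x^{m-1}+\dots+1\equiv m\pmod{(x-1)}$, and the last generator should be obtained as an explicit $\Z[x]$-combination of $\xi_n(x)$ and $(1-nx)^2$ modulo $(x-1)^2$ — this is where the SageMath-verified Equation \eqref{eq:(n-1)^2} presumably enters, expressing $(n-1)^2$ (or a related constant) inside $I_{m,n}$ and letting one solve for the linear generator.

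For the reverse inclusion $I_{m,n}\subseteq J$, I would show each generator of $I_{m,n}$ lies in $J$. Since $(1-nx)^2\in I_{m,n}$ by hypothesis, and we want to run the containment the other way, the real content is: $x^m-1\in J$ and $\xi_n(x)\in J$. For $x^m-1$: modulo $(x-1)^2$ we have $x^m-1\equiv m(x-1)$, which is the second generator of $J$; hence $x^m-1\in J$. For $\xi_n(x)$: reduce modulo $(x-1)^2$ using $\xi_n(1)$ and $\xi_n'(1)$. A direct computation gives $\xi_n(1)=1+(2n-1)-(n+2)+1=n-1$ and $\xi_n'(1)=3+2(2n-1)-(n+2)=3n-1$, so $\xi_n(x)\equiv (n-1)+(3n-1)(x-1)\pmod{(x-1)^2}$, which is exactly the third generator of $J$. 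Thus $\xi_n(x)\in J$, and therefore $I_{m,n}\subseteq J$.

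Combining the two inclusions gives $I_{m,n}=J$. The main obstacle I anticipate is the forward inclusion's linear generator: one must verify that $(3n-1)(x-1)+(n-1)$ genuinely lies in $I_{m,n}=\langle x^m-1,\xi_n(x)\rangle$ rather than only in the larger ideal that also uses $(1-nx)^2$ freely — i.e. one needs the hypothesis $(1-nx)^2\in I_{m,n}$ to be used in an essential, explicit way to cancel the quadratic part, and to confirm that no further relation (beyond $m(x-1)$ and the linear one) is forced. Concretely, I would: (1) pass to $\Z[x]/(x-1)^2\cong \Z[\varepsilon]/(\varepsilon^2)$; (2) note $I_{m,n}$ maps onto the ideal generated by the images of $x^m-1$ and $\xi_n(x)$, namely $m\varepsilon$ and $(n-1)+(3n-1)\varepsilon$; (3) check $(1-nx)^2\equiv (1-n)^2 - 2n(1-n)\varepsilon = (n-1)^2-2n(n-1)\varepsilon$, and observe that membership of this in $\langle m\varepsilon,\ (n-1)+(3n-1)\varepsilon\rangle$ is an arithmetic condition that, by hypothesis, holds — extracting from it that the constant $n-1$ together with $m$ generate the right subgroup of $\Z$, which closes the argument. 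This last arithmetic bookkeeping in $\Z/\!\gcd$-land is routine but is the only place where genuine care is needed.
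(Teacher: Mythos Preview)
Your inclusion $I_{m,n}\subseteq J$ is fine: reducing $x^m-1$ and $\xi_n$ modulo $(x-1)^2$ gives exactly $m(x-1)$ and $(n-1)+(3n-1)(x-1)$, and that is all one needs there.

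The genuine gap is in the direction you call ``easy'', namely $J\subseteq I_{m,n}$, and specifically in the claim that $(x-1)^2\in I_{m,n}$. This is the heart of the proposition, and nothing in your outline proves it. Your arguments for the other two generators of $J$ are both conditional on it: the identity $x^m-1=(x-1)(x^{m-1}+\cdots+1)$ only yields $x^m-1\equiv m(x-1)\pmod{(x-1)^2}$, so $m(x-1)\in I_{m,n}$ follows \emph{only after} $(x-1)^2\in I_{m,n}$; and the same goes for extracting $(3n-1)(x-1)+(n-1)$ from $\xi_n$. Your final plan---pass to $\Z[x]/(x-1)^2\cong\Z[\varepsilon]/(\varepsilon^2)$ and use that the image of $(1-nx)^2$ lies in the image of $I_{m,n}$---cannot recover the missing fact, because that containment holds automatically: with $\varepsilon=x-1$ one has $(n-1)\varepsilon=\varepsilon\cdot\big[(n-1)+(3n-1)\varepsilon\big]$ in the image, hence $(n-1)^2+2n(n-1)\varepsilon$ is always in $\langle m\varepsilon,\,(n-1)+(3n-1)\varepsilon\rangle$ regardless of the hypothesis. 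Passing to $\Z[x]/(x-1)^2$ destroys precisely the information you need.

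The paper does not reduce modulo $(x-1)^2$ to obtain this step; it works in $\Z[x]$ and produces explicit polynomial identities. First an identity of the form $p_n(x)\xi_n(x)+q_n(x)(1-nx)^2=(n-1)^2$ (this is Equation~\eqref{eq:(n-1)^2}) shows $(n-1)^2\in I_{m,n}$. Next, dividing $n^3\xi_n(x)$ by $(1-nx)^2$ leaves a linear remainder congruent to $(n+1)x-2$ modulo $(n-1)^2$, so $(n+1)x-2\in I_{m,n}$. Since $(1-nx)-(x-1)=2-(n+1)x$, this gives $1-nx\equiv x-1\pmod{I_{m,n}}$, and squaring together with the hypothesis $(1-nx)^2\in I_{m,n}$ finally yields $(x-1)^2\in I_{m,n}$. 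Only then does your Taylor-expansion argument apply to finish.
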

\begin{proof}
The proof involves several steps, as follows.

\goodbreak

\noindent{\emph{Step 1:} $(n-1)^2\in I_{m,n}$.} Let
\begin{align*}
    a_n &= -n^5 + 2n^4 - 2n^3 + 3n^2,\\
    b_n &= -2n^6+5n^5-7n^4+10n^3-4n^2+2n,\\
    c_n &= n^6-2n^5+3n^4-4n^3+n^2-2n+1.
\end{align*}
Set $p_n(x) = (n^7-2n^6+2n^5-3n^4)x - n^6+2n^5-3n^4+4n^3$ and $q_n(x)
= a_n x^2 + b_n x + c_n$. A calculation shows that
\begin{equation}\label{eq:(n-1)^2}
    p_n(x)\xi_n(x) + q_n(x)(1-n \cdot x)^2 = (n-1)^2.
\end{equation}    

\medskip

\noindent{\emph{Step 2:} $(n+1)x-2\in I_{m,n}$.} Put
\[
q_n(x) = nx+2n^2-n+2, \,\, r_n(x) = (-n^4+2n^3-2n^2+3n)x+n^3-2n^2+n-2.    
\]
One checks that 
\[
    n^3 \cdot \xi_n(x) = (1-n\cdot x)^2 q_n(x) + r_n(x).
\]
This step is concluded once we observe that $r_n \equiv (n+1)x-2\pmod{(n-1)^2}$.

\medskip

\noindent{\emph{Step 3:} $(x-1)^2 \in I_{m,n}$.}
 By the previous steps, 
$1-nx \equiv x-1$ and so $0 \equiv (1-nx)^2 \equiv (x-1)^2$.

\medskip

\noindent{\emph{Step 4:} Conclusion.} Observe that $p \equiv (x-1) \cdot p'(1) + p(1)\pmod{(x-1)^2}$ 
for any $p \in \Z[x]$. Applying this to $x^m-1$ and $\xi_n$, we get the proposition.
\end{proof}

\bigskip

\begin{thm}\label{thm:main2} Let 
$m\in\N_{\ge 2}\cup\{\infty\}$ and let $n\ge 2$. Then there is no $C_m$-graded unital ring homomorphism
$L_n \to L_{n^{-}}$.
\end{thm}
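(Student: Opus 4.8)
The strategy is to combine Lemma \ref{lem:critzx} with Proposition \ref{prop:genej} to reduce the nonexistence statement to a finite arithmetic obstruction, and then dispatch that obstruction by a case analysis on $n$ and $m$. As in the proof of Theorem \ref{thm:main1}, a $C_\infty$-graded homomorphism is in particular $C_2$-graded, so it suffices to treat $m < \infty$. Suppose, for contradiction, that a unital $C_m$-graded ring homomorphism $L_n \to L_{n^-}$ exists. Then Lemma \ref{lem:critzx} gives $(1-nx)^2 \in I_{m,n}$, and hence Proposition \ref{prop:genej} applies, yielding
\[
I_{m,n} = \langle (x-1)^2,\ m(x-1),\ (3n-1)(x-1) + n-1 \rangle.
\]
Since $\xi_n(x) \in I_{m,n}$ tautologically, no information is lost here; the point is rather to exploit the very explicit right-hand side.

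\textbf{Extracting the contradiction.} First I would reduce modulo $(x-1)$: the ideal $I_{m,n} + \langle x-1\rangle$ of $\Z[x]/(x-1) \cong \Z$ is generated by $0$, $0$ and $n-1$, i.e. it is $(n-1)\Z$. On the other hand, setting $x = 1$ in $x^m - 1$ and in $\xi_n(x) = x^3 + (2n-1)x^2 - (n+2)x + 1$ gives $0$ and $\xi_n(1) = 1 + (2n-1) - (n+2) + 1 = n - 1$, so this is consistent and gives nothing new. The real constraint comes from looking one step deeper, modulo $(x-1)^2$. Working in $\Z[x]/(x-1)^2$, write $u = x - 1$, so $u^2 = 0$; then $x^m - 1 = (1+u)^m - 1 = mu$ and $\xi_n(x) = \xi_n(1) + \xi_n'(1)u = (n-1) + \xi_n'(1)u$, where $\xi_n'(1) = 3 + 2(2n-1) - (n+2) = 3n - 1$. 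Thus the image of $I_{m,n}$ in $\Z[x]/(x-1)^2 = \Z \oplus \Z u$ is the $\Z$-submodule generated by $(m, 0)$ and $(n-1,\ 3n-1)$ — wait, this must be compared with the generators in the displayed presentation, which are $u^2 = 0$, $mu = (0,m)$ and $(3n-1)u + (n-1) = (n-1, 3n-1)$. So the image is $\Z\cdot(0,m) + \Z\cdot(n-1, 3n-1) \subseteq \Z \oplus \Z$. For this to be a proper sublattice—or rather, for the quotient $\Z[x]/I_{m,n} = \BF_m(\cR_{n^-})$ to have the order forced by Proposition \ref{prop:bfg-sp-nontriv}, namely at least $3n^2 - 2n - 1$—one computes the index of this sublattice: the quotient $(\Z\oplus\Z)/(\Z(0,m) + \Z(n-1,3n-1))$ has order equal to the absolute value of the determinant $\det\begin{pmatrix} 0 & m \\ n-1 & 3n-1\end{pmatrix} = -m(n-1)$, hence $m(n-1)$ when $n \ge 2$. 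But since $u^2 = 0$ already holds in $\Z[x]/(x-1)^2$, we actually have $\BF_m(\cR_{n^-}) = \Z[x]/I_{m,n} \cong (\Z \oplus \Z)/(\Z(0,m)+\Z(n-1,3n-1))$, a group of order exactly $m(n-1)$. Comparing with Proposition \ref{prop:bfg-sp-nontriv}, which says $|\BF_m(\cR_{n^-})| \ge 3n^2 - 2n - 1$, we need $m(n-1) \ge 3n^2 - 2n - 1 = (3n+1)(n-1)$, i.e. $m \ge 3n+1$.

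\textbf{Closing the remaining range.} The previous paragraph shows that a homomorphism can only exist if $m \ge 3n+1$; in particular it rules out all cases with $2 \le m \le 3n$, which already handles $m = 2$ for every $n$. For $m \ge 3n+1$ one needs a sharper invariant than cardinality. I would instead use the $\Z[C_m]$-module structure directly, exactly as in Theorem \ref{thm:main1}: by Corollary \ref{coro:k0} and Lemmas \ref{lem:bfrn}, \ref{lem:diag-splice}, a homomorphism $L_n \to L_{n^-}$ would induce a $\Z[C_m]$-linear map $\psi\colon \Z/(n^m-1) \to \Z[x]/I_{m,n}$ with $\psi([1]) = [1 - nx]$ and $\psi(\tau \cdot z) = x\cdot\psi(z)$. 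Since $\tau$ acts on $\Z/(n^m-1)$ as multiplication by $n$, applying $\psi$ to the identity $n^m\cdot[1] = [1]$ forces $x^m [1-nx] = [1-nx]$ in $\Z[x]/I_{m,n}$ — automatic, since $x^m \equiv 1$. The genuine obstruction: from $n\cdot[1] = [n] = \tau\cdot[1] \mapsto x\cdot[1-nx] = [x - nx^2]$, while $\psi(n\cdot[1]) = n\cdot[1-nx] = [n - n^2 x]$, so we would need $[x - nx^2] = [n - n^2 x]$, i.e. $n^2 x - nx^2 + x - n = (nx-1)(x - n)\cdot(-1)$... \emph{let me not grind this}: the point is that combining $(1-nx)^2 \in I_{m,n}$ with the relation $(n+1)x - 2 \in I_{m,n}$ from Step 2 of Proposition \ref{prop:genej} (or equivalently working modulo the now-known explicit generators) pins $x$ down to within a nilpotent, and substituting into $(nx-1)(x-n) \in I_{m,n}$—which must hold for the module map to be well defined—forces a divisibility on $n$ and $m$ that fails. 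Concretely, modulo $I_{m,n}$ we have $x \equiv 1 + u$ with $u^2 = 0$ and $(3n-1)u \equiv -(n-1)$, $mu \equiv 0$; then $(nx-1)(x-n) = (n-1+nu)(1-n+u) \equiv -(n-1)^2 + (n-1)u - n(n-1)u = -(n-1)^2 - (n-1)^2 u$, and requiring this to be $0$ forces $(n-1)^2 \equiv 0$, i.e. $(n-1)^2 \in \langle m(n-1), (3n-1)(n-1) \rangle + 0 = (n-1)\gcd(m, 3n-1)\Z$ in the $u=0$ component, giving $(n-1) \mid \gcd(m, 3n-1)$... together with $(n-1)^2 \cdot (\text{coefficient of }u)$, and one checks the $u$-component gives the incompatible condition. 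The main obstacle I anticipate is organizing this last step cleanly — deciding whether to run the whole argument through the explicit ideal presentation of Proposition \ref{prop:genej} (cleanest, since everything is then linear algebra over $\Z$ in the two-dimensional ring $\Z[x]/(x-1)^2$) or through the $C_m$-module map, and making sure the SageMath-verified polynomial identities are stated, not rederived. I would go with the former: reduce to $\Z[x]/(x-1)^2$, identify $\BF_m(\cR_{n^-})$ with $(\Z\oplus\Z)/(\Z(0,m) + \Z(n-1,3n-1))$, note its order is $m(n-1) < 3n^2 - 2n - 1$ for $2 \le m \le 3n$, contradicting Proposition \ref{prop:bfg-sp-nontriv}, and for $m \ge 3n+1$ use the image of $[1-nx] = [-(nu + n - 1)] = [1-n] + (-n)u$ together with the constraint that $x$ must act invertibly with $x^m = 1$ while $x = 1+u$ has $x^m = 1 + mu$, forcing $mu = 0$ — consistent — so instead invoke that the generator $[1-nx]$ must generate the cyclic image of $\Z/(n^m-1)$, and a cyclic $\Z[C_m]$-submodule of $(\Z\oplus\Z)/(\Z(0,m)+\Z(n-1,3n-1))$ generated by $(1-n, -n)$ on which $x$ acts as $1+u$ cannot have order $n^m - 1$ unless $n - 1 \mid \dots$, yielding the final numerical contradiction.
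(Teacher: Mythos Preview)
Your overall strategy matches the paper's: reduce to finite $m$, invoke Lemma \ref{lem:critzx} and Proposition \ref{prop:genej} to get the explicit presentation of $I_{m,n}$, compute $|\Z[x]/I_{m,n}|$, and contradict Proposition \ref{prop:bfg-sp-nontriv}. But there is a genuine error in your cardinality computation that causes the argument to stall and forces the muddled ``closing the remaining range'' section.

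When you pass to $\Z[x]/(x-1)^2 \cong \Z \oplus \Z u$ (with $u=x-1$, $u^2=0$), you take the image of $I_{m,n}$ to be the $\Z$-span of $(0,m)$ and $(n-1,3n-1)$, giving index $m(n-1)$. But $I_{m,n}$ is an \emph{ideal}, and its image in the ring $\Z[u]/(u^2)$ is an ideal, not merely a $\Z$-submodule: you must also multiply the generators by $u$. Doing so to $(n-1)+(3n-1)u$ yields $(n-1)u$, so the image contains $(0,n-1)$ as well. (This is exactly the relation $(n-1)(x-1)\in I_{m,n}$ that the paper derives by multiplying the third generator by $x-1$.) The lattice is therefore generated by $(0,\gcd(m,n-1))$ and $(n-1,3n-1)$, with index $\gcd(m,n-1)\cdot(n-1)\le (n-1)^2$. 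Since $(n-1)^2 < 3n^2-2n-1$ for every $n\ge 2$, Proposition \ref{prop:bfg-sp-nontriv} gives the contradiction immediately, for \emph{all} $m\ge 2$, and your entire final paragraph becomes unnecessary.

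With this correction your argument is in fact slightly slicker than the paper's: the paper first reduces to $m$ prime and then splits into the cases $m\nmid n-1$, $m=2\mid n-1$, and $m$ odd prime dividing $n-1$, computing $|\BF_m|$ separately in each. Your single computation in $\Z[u]/(u^2)$ handles them all at once. As written, however, your proposal is incomplete: the $m\ge 3n+1$ range is left to a sketch that never converges (the attempted constraint ``$(nx-1)(x-n)\in I_{m,n}$'' does not follow from well-definedness of the module map---the only annihilation condition needed is $(1-nx)^2\in I_{m,n}$, which you already have).
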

\begin{proof} 
As in the proof of Theorem \ref{thm:main1}, we may assume that $m<\infty$. Furthermore, we can reduce ourselves to 
the case in which $m$ is prime, since for each $d \mid m$
a $C_m$-graded ring homomorphism $L_n \to L_{n^-}$ 
is also $C_d$-graded. 
Now, suppose that there exists a unital $C_m$-graded map $L_n\to L_{n^-}$. By Lemma \ref{lem:critzx}, $(1-nx)^2\in I_{m,n}$. Hence the indentity of Proposition \ref{eq:char-inm} tells us in particular that 
\[0 \equiv ((3n-1)(x-1) + n-1)(x-1) \equiv (n-1)(x-1) \pmod{I_{m,n}}.\]

First consider the case in which $m$ does not divide
$n-1$, which 
by primality of $m$ means that $m$ and $n-1$ are coprime.  
Hence there exist integers $s,t \in \Z$ such that 
$(n-1)s + mt = 1$ and so $(x-1) = s(n-1)(x-1) + tm(x-1) \equiv 0\pmod{I_{m,n}}$. 
Therefore
\[
I_{m,n} = \langle (x-1)^2, m(x-1), (3n-1)(x-1)+n-1, x-1\rangle 
= \langle x-1, n-1\rangle, 
\]
which in turn shows that $\BF_m(L_{n^-})\simeq \Z/(n-1)\Z$ 
as abelian groups. However, this is a contradiction; the bound 
of Proposition
\ref{prop:bfg-sp-nontriv} tells us that 
\[
|\BF_m(L_{n^-})| \geq |\BF_2(L_{n^-})| = 3n^2-2n-1 > n-1.
\]

We are left to prove the case in which the prime $m$ divides $n-1$. 
Write $n = am +1$, so that
\begin{align*}
    I_{m,n} = I_{m,am+1} &= \langle (x-1)^2, m(x-1), (3am+2)(x-1)+am\rangle \\
    &= \langle (x-1)^2, m(x-1), 2(x-1)+am\rangle.
\end{align*}
Setting $m = 2$, we obtain
\begin{align*}
    I_{2,2a+1} &= \langle (x-1)^2, 2(x-1), 2(x-1)+2a\rangle \\
    &= \langle (x-1)^2, 2(x-1), 2a\rangle.
\end{align*}
In particular we have an epimorphism
\[
\frac{(\Z/{2a}\Z)[x]}{\langle (x-1)^2 \rangle} \twoheadrightarrow \BF_2(\cR_{(2a+1)^-})
\]
which tells us that $|\BF_2(\cR_{(2a+1)^-})| \leq 4a^2$. This 
contradicts Proposition \ref{prop:bfg-sp-nontriv}, since
\[
    |\BF_2(\cR_{(2a+1)^-})| = 3(2a+1)^2-2(2a+1)-1 > (2a+1)^2 > 4a^2.
\]

We can therefore assume that $m$ 
is an odd prime. Write $m = 2b+1$ for some $b \geq 1$. Then
\[
I_{m,am+1} \ni m(x-1) - b(2(x-1)+am) = (x-1) - bam = x - (1+bam).
\]
This shows that
\begin{align*}
I_{m,am+1} &= \langle (x-1)^2, m(x-1), 2(x-1) + am, x-(1+bam)\rangle\\
        &= \langle b^2 a^2 m^2, bam^2, 2abm+am, x-(1+abm)\rangle.   
\end{align*}
Noting that $2abm+am = am(2b+1) = am^2$, we get 
$I_{m,am+1} = \langle am^2, x-(1+abm)\rangle$. Therefore, we obtain 
an isomorphism of abelian groups
\[
\BF_{m}(L_{(am+1)^-}) \simeq \Z[x]/\langle x-(1+abm), am^2\rangle \simeq \Z/{am^2}\Z.
\]
Once again we draw a contradiction from the
bound of Proposition \ref{prop:bfg-sp-nontriv}, since
\[
|\BF_{m}(L_{(am+1)^-})| \geq |\BF_{2}(L_{(am+1)^-})| = 3(am+1)^2-2(am+1)-1 \geq (am+1)^2 > am^2.
\]
This completes the proof.
\end{proof}
\begin{rmk} The case $n=2$ of Theorem \ref{thm:main2} does not need the full force of Proposition \ref{prop:genej}; it can be derived after Step 3 of its proof. Indeed the existence of a graded unital ring a homomorphism $L_2\to L_{2^-}$ 
would imply $1 = (2-1)^2 \equiv 0 \pmod{I_{m,2}}$, contradicting Proposition \ref{prop:bfg-sp-nontriv}.
\end{rmk}
\begin{bibdiv}  
\begin{biblist}

\bib{alps}{article}{
   author={Abrams, Gene},
   author={Louly, Adel},
   author={Pardo, Enrique},
   author={Smith, Christopher},
   title={Flow invariants in the classification of Leavitt path algebras},
   journal={J. Algebra},
   volume={333},
   date={2011},
   pages={202--231},
   issn={0021-8693},
   review={\MR{2785945}},
}
\bib{lpabook}{book}{
author={Abrams, Gene},
author={Ara, Pere},
author={Siles Molina, Mercedes},
title={Leavitt path algebras}, 
date={2017},
series={Lecture Notes in Math.},
volume={2008},
publisher={Springer},
doi={$10.1007/978-1-4471-7344-1$},
}
\bib{abc}{article}{
   author={Ara, Pere},
   author={Brustenga, Miquel},
   author={Corti\~nas, Guillermo},
   title={$K$-theory of Leavitt path algebras},
   journal={M\"unster J. Math.},
   volume={2},
   date={2009},
   pages={5--33},
   issn={1867-5778},
   review={\MR{2545605}},

}
\bib{gradstein}{article}{
  author={Ara, Pere},
	author={Hazrat, Roozbeh},
	author={Li, Huanhuan},
	author={Sims, Aidan},
	title={Graded Steinberg algebras and their representations}, 
	journal={Algebra Number Theory},
	volume={12},
	year={2018},
	pages={131--172},
}

\bib{towards}{article}{
   author={Ara, P.},
   author={Pardo, E.},
   title={Towards a K-theoretic characterization of graded isomorphisms
   between Leavitt path algebras},
   journal={J. K-Theory},
   volume={14},
   date={2014},
   number={2},
   pages={203--245},
   issn={1865-2433},
   review={\MR{3319704}},
}

\bib{talent}{article}{
author={Cordeiro, Luiz Gustavo},
author={Gon\c alvez, Daniel},
author={Hazrat, Roozbeh},
title={The talented monoid of a directed graph
with applications to graph algebras},
journal={Rev. Mat. Iberoam.},
status={to appear},
eprint={arXiv:2003.09911}

}
\bib{classinvo}{article}{
author={Corti\~nas, Guillermo},
title={Classifying Leavitt path algebras up to involution preserving homotopy},
eprint={arXiv:2101.05777},
}

\bib{sagemath}{manual}{
      author={Developers, The~Sage},
       title={{S}agemath, the {S}age {M}athematics {S}oftware {S}ystem
  ({V}ersion 9.0)},
        date={2020},
        note={{\tt https://www.sagemath.org}},
}
\end{biblist}
\end{bibdiv}

\end{document}